\title[On Lagrangian embeddings of closed non-orientable $3$-manifolds]{On Lagrangian embeddings of closed non-orientable $3$-manifolds}
\author{Toru Yoshiyasu}
\address{Center for Genomic Medicine, Graduate School of Medicine, Kyoto University, 53 Shogoinkawahara-cho, Sakyo-ku, Kyoto-City, Kyoto, 606-8507 Japan}
\email{tyoshiyasu@genome.med.kyoto-u.ac.jp}
\date{\today}
\keywords{Lagrangian submanifold; $h$-principle; loose Legendrian; Lagrangian cobordism; Lagrangian surgery; Maslov index.}
\subjclass[2010]{Primary 53D12; Secondary 57R17, 57N35}
\theoremstyle{plain}
\newtheorem{theorem}{Theorem}[section]
\newtheorem{prop}[theorem]{Proposition}
\newtheorem{lemma}[theorem]{Lemma}
\theoremstyle{definition}
\newtheorem{definition}[theorem]{Definition}
\newtheorem{remark}[theorem]{Remark}
\begin{document}

\begin{abstract}
We prove that for any compact orientable connected $3$-manifold with torus boundary, a concatenation of it and the direct product of the circle and the Klein bottle with an open $2$-disk removed admits a Lagrangian embedding into the standard symplectic $6$-space.
Moreover, minimal Maslov number of the Lagrangian embedding is equal to $1$.
\end{abstract}

\maketitle

\section{Introduction}

In this paper, we study the existence problem of a Lagrangian embedding into the standard symplectic space $\mathbb{R}^{6}_{\mathrm{st}}=(\mathbb{R}^{6},\sum_{j=1}^{3}dx_j\wedge dy_j)$.
Starting with M.~Gromov's discovery of the technique of pseudo-holomorphic curves~\cite{Gr85}, a number of necessary conditions for the existence of a Lagrangian embedding have been proven.
A typical example for the standard symplectic space $\mathbb{R}^{6}_{\mathrm{st}}$ is a partial classification of Lagrangian submanifolds proved by K.~Fukaya: a closed orientable connected prime $3$-manifold $L$ admits a Lagrangian embedding into $\mathbb{R}^{6}_{\mathrm{st}}$ if and only if there exists a non-negative integer $g$ such that $L$ is diffeomorphic to the product $S^{1}\times\Sigma_{g}$, where $\Sigma_{g}$ is the closed orientable connected surface of genus $g$~\cite{Fu06}.
On the other hand, few sufficient conditions for the existence of a Lagrangian embedding were known.
Recently, Y.~Eliashberg and E.~Murphy established the resolving theory of Lagrangian intersections and proved the $h$-principle for Lagrangian embeddings with a concave loose Legendrian boundary~\cite{EM13}.
This $h$-principle has applications to the existence of a Lagrangian embedding of closed manifolds.
For the standard symplectic space $\mathbb{R}^{6}_{\mathrm{st}}$, T.~Ekholm, Y.~Eliashberg, E.~Murphy, and I.~Smith gave the following application: for a closed orientable connected $3$-manifold $L$, there exists a Lagrangian embedding of the connected sum $L\#(S^{1}\times S^{2})$ into $\mathbb{R}^{6}_{\mathrm{st}}$~\cite{EEMS13}.
The theory of loose Legendrian embeddings developed by E.~Murphy~\cite{Mu12} played a central role in the resolving theory and in the application.
Here we give another application of the results of \cite{EM13}.

We introduce some notations and conventions before the statement.
For a non-negative integer $g$, we denote by $N_{2g}$ the closed non-orientable connected surface of Euler characteristic $-2g$.
We fix an embedded closed $2$-disk $D^2_N$ in the Klein bottle $N_0$.
We also fix an identification of the compact surface $N_0\setminus\mathrm{Int}\,D^2_N$ with the compact surface obtained by the orientation-reversing $0$-surgery on the unit closed $2$-disk $D^2$.
This identification induces a diffeomorphism $\partial(S^1\times(N_0\setminus\mathrm{Int}\,D^2_N))\to\partial(S^1\times D^2)$ between $2$-tori.
For a closed orientable connected $3$-manifold $M$ and an embedded $2$-torus $T\subset M$ bounding a solid torus with a parameterization $S^1\times D^2$, we denote by $M_T$ the closed non-orientable connected $3$-manifold
\[
(M\setminus\mathrm{Int}\,(S^1\times D^2))\cup(S^1\times(N_0\setminus\mathrm{Int}\,D^2_N))
\]
concatenated along their boundaries by the above diffeomorphism.
Our main result is the following.
\begin{theorem}\label{Main}
Let $M$ be a closed orientable connected $3$-manifold and $T\subset M$ an embedded $2$-torus bounding a solid torus with a parameterization $S^1\times D^2$.
Then, there exists a Lagrangian embedding $M_T\to\mathbb{R}^{6}_{\mathrm{st}}$ of minimal Maslov number $1$.
In particular, for a closed orientable connected $3$-manifold $L$ and a non-negative integer $g$, there exists a Lagrangian embedding $L\#(S^1\times N_{2g})\to\mathbb{R}^{6}_{\mathrm{st}}$ of minimal Maslov number $1$.
\end{theorem}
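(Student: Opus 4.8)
The plan is to reduce the \emph{in particular} assertion to the first one topologically, and to prove the first one by splitting $M_T$ along the torus $T$ into an arbitrary orientable piece, which I realize flexibly by the $h$-principle of \cite{EM13}, and a single fixed non-orientable piece, which I realize by an explicit Lagrangian model. For the reduction I would put $M=L\#(S^1\times\Sigma_g)$ and take $T=S^1\times\gamma$, where $\gamma=\partial D$ bounds an embedded $2$-disk $D\subset\Sigma_g$ placed away from the connected-sum region. The solid torus bounded by $T$ is then $S^1\times D$, so forming $M_T$ replaces $\Sigma_g$ fiberwise by $\Sigma_g\#N_0$; since a closed non-orientable surface is determined by its Euler characteristic and $\chi(\Sigma_g\#N_0)=-2g$, we get $\Sigma_g\#N_0\cong N_{2g}$ and hence $M_T\cong L\#(S^1\times N_{2g})$, so the first assertion applies.

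For the first assertion I write $M_T=W\cup_{T^2}P$ with $W=M\setminus\mathrm{Int}(S^1\times D^2)$ a compact orientable $3$-manifold with torus boundary and $P=S^1\times(N_0\setminus\mathrm{Int}\,D^2_N)$ the fixed non-orientable piece. First I would construct, by hand, an explicit Lagrangian embedding of $P$ into a Darboux ball $B^6$ whose boundary $\partial P=T^2$ is a Legendrian torus $\Lambda\subset S^5=\partial B^6$, and I would verify that $\Lambda$ is loose (building a loose chart into the model if necessary). This step genuinely uses the Klein-bottle factor: a closed Lagrangian Klein bottle does not embed in $\mathbb{R}^4$, so the construction must exploit both the extra dimension and the boundary, starting from the explicit Lagrangian immersion of $N_0$ in $\mathbb{C}^2$, crossing with the $S^1$-direction, and truncating along $\partial D^2_N$.

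Next I would realize the arbitrary piece $W$ flexibly: in the concave region $\mathbb{R}^6\setminus\mathrm{Int}\,B^6\cong S^5\times[0,\infty)$ I seek a compact Lagrangian cap with concave loose Legendrian boundary $\Lambda$ and interior diffeomorphic to $W$. Because every orientable $3$-manifold is parallelizable, $TW\otimes\mathbb{C}$ is trivial and the $h$-principle for Lagrangian immersions supplies the formal data; since $\Lambda$ is loose, the $h$-principle of \cite{EM13} for Lagrangian embeddings with a concave loose Legendrian boundary upgrades this to a genuine Lagrangian cap. Arranging both $P$ and $W$ to meet $S^5$ cylindrically along $\Lambda$, their concatenation is a smooth closed Lagrangian embedding of $M_T$. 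The main obstacle is the interface along $\Lambda$: one must check that the formal Lagrangian data (equivalently, the complexified Gauss map) of the flexible cap extends the boundary data of the explicit filling, and this compatibility is precisely what forces the non-orientable topology of $P$. Producing the explicit filling of $P$ with a \emph{loose} boundary, in the presence of the Klein-bottle rigidity phenomena, is the crux of the argument.

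Finally I would identify the minimal Maslov number. Since $\mathbb{R}^6$ is contractible, the Maslov class is a homomorphism $\mu\colon\pi_1(M_T)\to\mathbb{Z}$ whose reduction modulo $2$ equals $w_1(M_T)$; as $M_T$ is non-orientable, $w_1(M_T)\neq0$, so $\mu\not\equiv0$ and the minimal Maslov number satisfies $\mu_{\min}\geq1$. For the matching upper bound I would track the Lagrangian Gauss map of the explicit piece $P$ along an orientation-reversing circle in its $N_0$-factor: the model is built so that the tangent Lagrangian plane turns by a single half-turn along this loop, giving it Maslov index exactly $1$. Hence $1$ lies in the image of $\mu$ and $\mu_{\min}=1$.
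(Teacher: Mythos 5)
Your overall architecture coincides with the paper's: split $M_T$ along $T$ into the fixed non-orientable piece $P=S^1\times(N_0\setminus\mathrm{Int}\,D^2_N)$ realized as an explicit Lagrangian filling of a loose Legendrian torus, realize the arbitrary orientable piece $W$ as a Lagrangian cap via the $h$-principle of \cite{EM13}, concatenate, and detect minimal Maslov number $1$ on the explicit piece. The topological reduction of the ``in particular'' statement is also the same. However, the two steps that constitute the actual mathematical content of the paper are asserted rather than proved, and the sketch you do give for the first of them would not work as stated. You propose to obtain the filling of $P$ by taking ``the explicit Lagrangian immersion of $N_0$ in $\mathbb{C}^2$, crossing with the $S^1$-direction, and truncating along $\partial D^2_N$.'' A product of an immersed Lagrangian Klein bottle with a circle factor is still immersed (the double points of the $N_0$-immersion persist on whole circles), is closed rather than bounded by a Legendrian, and truncating it does not produce a Legendrian boundary in a contact hypersurface. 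The paper's construction is genuinely different: it builds $N_0\setminus\mathrm{Int}\,D^2_N$ as a Lagrangian filling of the \emph{stabilized} Legendrian unknot $K_2$ in $\mathbb{R}\times\mathbb{R}^3_{\mathrm{st}}$, by concatenating the disk filling of the standard unknot with an immersed Lagrangian cobordism having a single double point of self-intersection number $-1$ and resolving that double point by Polterovich surgery (which is what creates the embedded non-orientable topology), and only then applies the front $S^1$-spinning construction; looseness of the resulting torus comes from the stabilization via \cite{DG14}, not from ``building a loose chart into the model if necessary.'' Similarly, your Maslov upper bound rests on the model being ``built so that'' an orientation-reversing loop has index exactly $1$; in the paper this is a nontrivial computation of the difference of Maslov potentials at the double point (rotation angle $2\varphi_1$ with $\frac{\pi}{2}<2\varphi_1<\pi$) combined with the contribution $\varphi_2$ of the Polterovich handle and a parity argument, yielding $2\varphi_1+\varphi_2=2\pi$. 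Your lower bound ``$\mu\not\equiv 0$ hence $\mu_{\min}\geq 1$'' is vacuous; the only content is exhibiting a loop of index exactly $1$.

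There is also a smaller but genuine gap on the cap side: \cite[Theorem 2.2]{EM13} requires a \emph{formal Lagrangian embedding}, and the Gromov--Lees $h$-principle only hands you an immersed Lagrangian cap. One must first arrange the self-intersection number of that immersion to vanish modulo two before it can be deformed to formal embedding data; the paper does this by forming a Lagrangian connected sum with Audin's Lagrangian immersion of $S^3$ with an odd number of double points. Your proposal passes directly from formal immersion data to the embedded cap and omits this correction entirely.
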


Our proof is similar to that of the above application~\cite{EEMS13}, concatenating a Lagrangian filling and a Lagrangian cap.
The existence of a Lagrangian cap is a consequence of the results of \cite{EM13}.
In \cite{EEMS13}, a Lagrangian filling of a loose Legendrian $2$-sphere is constructed.
The new part of this paper is a construction of a Lagrangian filling of a loose Legendrian $2$-torus.

\begin{theorem}\label{torusfilling}
A loose Legendrian $2$-torus of vanishing Maslov class in the standard contact space $\mathbb{R}^{5}_{\mathrm{st}}$ admits a Lagrangian filling $S^1\times(N_0\setminus\mathrm{Int}\,D^2_N)$ of minimal Maslov number $1$.
\end{theorem}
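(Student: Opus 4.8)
The plan is to reduce, via Murphy's $h$-principle for loose Legendrians, to the construction of a single explicit model, and then to build that model by a spinning construction applied to a non-orientable Lagrangian filling of a Legendrian knot. First I would invoke \cite{Mu12}: any two loose Legendrian embeddings in the same formal class are Legendrian isotopic, and an exact Lagrangian filling may be transported along a Legendrian isotopy by concatenating it with the cylindrical Lagrangian trace of the isotopy, which preserves both the diffeomorphism type of the filling and its Maslov data. Thus it suffices to exhibit one loose Legendrian $2$-torus $\Lambda_0$ of vanishing Maslov class, formally isotopic to the given one, together with an exact Lagrangian filling diffeomorphic to $S^1\times(N_0\setminus\mathrm{Int}\,D^2_N)$ of minimal Maslov number $1$; I would check separately that looseness together with vanishing Maslov class determines a loose Legendrian $2$-torus up to formal Legendrian isotopy, so that the model matches an arbitrary such torus.

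To build the model I would descend one dimension. Starting from the Legendrian unknot in $\mathbb{R}^3_{\mathrm{st}}$ and its Lagrangian disk filling, I would attach two orientation-reversing half-twisted Lagrangian bands, realized as saddle (pinch) cobordisms, to obtain an exact Lagrangian once-punctured Klein bottle $\Sigma\cong N_0\setminus\mathrm{Int}\,D^2_N$ filling some Legendrian knot $K\subset\mathbb{R}^3_{\mathrm{st}}$; the two cross-cap bands keep a single boundary component and recover exactly the orientation-reversing $0$-surgery on the disk described before the statement. Choosing the stabilizations so that $\mathrm{rot}(K)=0$, I would then apply the front-spinning construction (in the sense of Ekholm--Etnyre--Sullivan) to the pair $(K,\Sigma)$. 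Spinning sends $K$ to a Legendrian torus $\Lambda_0\cong S^1\times K$ in $\mathbb{R}^5_{\mathrm{st}}$ and sends $\Sigma$ to an exact Lagrangian $L_0\cong S^1\times\Sigma\cong S^1\times(N_0\setminus\mathrm{Int}\,D^2_N)$ with Legendrian end $\Lambda_0$.

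It then remains to verify the three quantitative properties. For the Maslov class of $\Lambda_0$: the spinning direction contributes a trivial rotation and the $K$-direction contributes $\mathrm{rot}(K)=0$, so $\mu_{\Lambda_0}=0\in H^1(T^2;\mathbb{Z})$. For looseness: the stabilization zig-zags in the front of $K$ spin to a loose chart in $\Lambda_0$, which I would exhibit directly through the product structure of loose charts, using the criterion of \cite{Mu12}. For the minimal Maslov number of $L_0$: since $\mathbb{R}^6_{\mathrm{st}}$ is simply connected, the relative Maslov homomorphism factors through $H_1(L_0;\mathbb{Z})\cong\mathbb{Z}^3$; the $S^1$-factor and the boundary class of the torus evaluate to $0$, while a loop around a cross-cap is orientation-reversing, so $w_1$ is nonzero along it and its Maslov index is odd. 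Hence the image of the Maslov homomorphism is exactly $\mathbb{Z}=\langle\pm1\rangle$, giving minimal Maslov number $1$ and, consistently, vanishing restriction to $\partial L_0$.

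The main obstacle I expect is the simultaneous control of rigidity and flexibility at the boundary: $L_0$ is produced by a rigid, explicit non-orientable Lagrangian surgery, yet Theorem \ref{torusfilling} must apply to an \emph{arbitrary} loose Legendrian $2$-torus, so the reduction in the first paragraph forces $\Lambda_0$ to be genuinely loose and formally isotopic to the given torus. The crux is therefore twofold: realizing the orientation-reversing band attachments as honest exact Lagrangian handles, keeping track of the primitive of the Liouville form across each half-twisted saddle so that $L_0$ remains exact and embedded; and producing the loose chart in the spun torus $\Lambda_0$ while confirming that looseness and vanishing Maslov class pin down a single formal class. Once these are in place, the $h$-principle of \cite{Mu12} closes the argument.
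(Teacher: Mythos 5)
Your overall architecture --- build a once-punctured Klein bottle Lagrangian filling of a Legendrian knot in $\mathbb{R}^{3}_{\mathrm{st}}$, then apply front $S^1$-spinning and invoke uniqueness of the loose Legendrian torus with vanishing Maslov class --- is the same as the paper's, but two steps as you state them do not work. The more serious one is the minimal Maslov number. From the fact that a loop around a cross-cap is orientation-reversing you may only conclude that its Maslov index is \emph{odd}; since the image of the Maslov homomorphism is a subgroup $d\mathbb{Z}\subset\mathbb{Z}$, this gives $d$ odd, not $d=1$. (Indeed, writing $\pi_1$ of the punctured Klein bottle as $\langle a,b\rangle$ with boundary $a^2b^2$ and using that the boundary has Maslov index $0$, you get image $\mu(a)\mathbb{Z}$ with $\mu(a)$ odd --- which could a priori be $3\mathbb{Z}$.) Pinning down the value $1$ is precisely where the paper's proof spends its effort: it computes the difference of Maslov potentials at the double point of an explicit immersed cobordism from $K_1$ to its stabilization $K_2$ (finding $\tfrac12$ via an explicit path of determinants $\det(X(s)+iY(s))^2$), and then bounds the extra rotation contributed by Polterovich's $1$-handle to conclude the orientation-reversing loop has Maslov index exactly $1$. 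Your proposal contains no substitute for this computation, and the claimed inference ``odd, hence the image is $\mathbb{Z}$'' is a non sequitur.

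The second problem is your insistence on exactness. A loose Legendrian admits no exact Lagrangian filling, and neither does a stabilized Legendrian knot in $\mathbb{R}^{3}_{\mathrm{st}}$; the paper explicitly remarks that both $\tilde h_0$ and $F$ are non-exact for exactly these reasons (citing \cite{Ch02, Ek08, Mu12}). So the ``exact Lagrangian once-punctured Klein bottle'' and the ``exact Lagrangian $L_0$'' you aim for do not exist, and the crux you identify --- controlling the primitive of the Liouville form across the half-twisted saddles to keep $L_0$ exact --- is aimed at proving something false. This is repairable: transporting a filling along a Legendrian isotopy by concatenating with the trace of the isotopy works for non-exact fillings too, so the reduction to a single model survives once you drop exactness. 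Finally, note that the paper produces the non-orientable surface not by two cross-cap band attachments but by resolving the single double point (of self-intersection number $-1$) of an immersed disk filling of $K_2$ via Polterovich surgery; that choice is not incidental, since it is the explicit parametrization of the immersed cobordism that makes the Maslov computation in the first paragraph possible.
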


\begin{remark}
By Murphy's $h$-principle for loose Legendrian embeddings~\cite{Mu12}, a loose Legendrian $2$-torus of vanishing Maslov class in the standard contact space $\mathbb{R}^{5}_{\mathrm{st}}$ is unique up to Legendrian isotopy, see \cite[Appendix A]{Mu12}.
\end{remark}

\subsection*{Acknowledgement}

The author is deeply grateful to Yasha Eliashberg for sharing his idea and for helpful discussions.
The construction used in this paper is based on his advice.
The author is also grateful to the referee for valuable comments on improvements of the result and on expositions.
The author is thankful to Emmy Murphy, Morimichi Kawasaki, and Kaoru Ono for helpful conversations.

\section{Proof of Theorems}

\subsection{Construction of a Lagrangian filling}\label{prooffilling}

In this section, we prove Theorem~\ref{torusfilling}.
First, we recall some background on Lagrangian cobordisms.

\begin{definition}
Let $(Y,\alpha)$ be a coorientable contact manifold and $\mathbb{R}\times Y$ its symplectization equipped with the symplectic structure $d(e^{t}\alpha)$, where $t$ is the coordinate of $\mathbb{R}$.
For Legendrian submanifolds $\Lambda_-$ and $\Lambda_+$ of $(Y,\alpha)$, a \textit{Lagrangian cobordism} from $\Lambda_-$ to $\Lambda_+$ is a properly embedded Lagrangian submanifold $L$ in the symplectization $\mathbb{R}\times Y$ such that
\[
L\cap(-\infty,t_-]\times Y=(-\infty,t_-]\times\Lambda_- \text{ and } L\cap[t_+,\infty)\times Y=[t_+,\infty)\times\Lambda_+
\]
for real constants $t_-$ and $t_+$ with $t_-<t_+$.
A Lagrangian cobordism is \textit{exact} if the Lagrangian submanifold $L$ is exact, i.e.~the $1$-form $e^{t}\alpha\mid_L$ is exact.
A Lagrangian cobordism is called a \textit{Lagrangian cap} (resp.~\textit{Lagrangian filling}) if $\Lambda_{+}=\emptyset$ (resp.~$\Lambda_{-}=\emptyset$).
Immersed Lagrangian cobordism, cap, and filling and their exactness are defined in a similar way.
\end{definition}

In the rest of the paper, we identify a Lagrangian cobordism $L$ with its restriction $L\cap[t_-,t_+]\times Y$.
In the case that the constants $t_\pm$ are fixed, we also call $L$ a Lagrangian cobordism from $\{t_-\}\times\Lambda_-$ to $\{t_+\}\times\Lambda_+$.

\begin{definition}
Let $L_0$ be a Lagrangian cobordism from $\{t_0\}\times\Lambda_0$ to $\{t_1\}\times\Lambda_1$ and $L_1$ a Lagrangian cobordism from $\{t_1\}\times\Lambda_1$ to $\{t_2\}\times\Lambda_2$.
A \textit{concatenation} of $L_0$ and $L_1$ along $\{t_1\}\times\Lambda_1$ is a Lagrangian cobordism from $\{t_0\}\times\Lambda_0$ to $\{t_2\}\times\Lambda_2$ defined by the union $L_0\cup L_1$.
A concatenation of immersed Lagrangian cobordisms is defined in a similar way.
\end{definition}

For integers $m=1$ and $2$, we denote by $\mathbb{R}^{2m+1}_{\mathrm{st}}$ the standard contact space $(\mathbb{R}^{2m+1},\alpha_{\mathrm{st}}=dz-\sum_{j=1}^{m}y_{j}dx_j)$ and by $\mathbb{R}\times\mathbb{R}^{2m+1}_{\mathrm{st}}$ its symplectization equipped with the symplectic structure $d(e^t\alpha_{\mathrm{st}})$.
We fix the parameterizations of the standard Legendrian unknot 
\[
K_1\colon S^1\to\mathbb{R}^{3}_{\mathrm{st}}:\theta\mapsto\Bigl(\sin\theta,-\sin2\theta,\frac{2}{3}\cos^3\theta\Bigr)
\]
and its stabilization
\[
K_2\colon S^1\to\mathbb{R}^{3}_{\mathrm{st}}:\theta\mapsto\Bigl(\sin\theta,\sin4\theta,\frac{4}{3}\cos^3\theta-\frac{8}{5}\cos^5\theta\Bigr),
\]
see Figure~\ref{stab}.
\begin{figure}[h]
	\centering
	\includegraphics[width=10cm,clip]{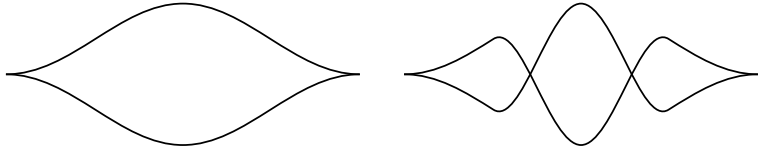}
	\caption{The fronts of the standard Legendrian unknot and its stabilization.}
	\label{stab}
\end{figure}

The following construction of a Lagrangian filling of $K_2$ is the main part of the proof of Theorem~\ref{torusfilling}.

\begin{prop}\label{knotfilling}
There exists a Lagrangian filling $N_0\setminus \mathrm{Int}\,D^2_N\to\mathbb{R}\times\mathbb{R}^{3}_{\mathrm{st}}$ of $K_2$ of minimal Maslov number $1$.
\end{prop}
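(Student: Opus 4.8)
The plan is to build the filling in two stages. First I would produce an immersed Lagrangian disk in $\mathbb{R}\times\mathbb{R}^3_{\mathrm{st}}$ with boundary $K_2$ and exactly one transverse double point, and then I would resolve that double point by a Lagrangian (Polterovich) surgery carried out in the orientation-reversing fashion. The point is that this second step is exactly the operation fixed in the introduction: as an abstract surface, an immersed disk with a single double point is a $2$-disk with two marked interior points $p_1,p_2$ sharing the same image, and deleting small disk neighborhoods of $p_1$ and $p_2$ leaves a pair of pants whose two inner boundary circles are then glued to one another through the Lagrangian neck. Gluing them orientation-compatibly produces the once-punctured torus, whereas gluing them orientation-reversingly produces $N_0\setminus\mathrm{Int}\,D^2_N$, since the latter is by definition the result of the orientation-reversing $0$-surgery on $D^2$. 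Thus the only choice I must control is the one between the two Polterovich necks, and I take the orientation-reversing one.

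For the immersed disk I would start from the embedded Lagrangian disk filling $L_1$ of the standard Legendrian unknot $K_1$, whose existence is classical, and pass from $K_1$ to its stabilization $K_2$ by an immersed Lagrangian cobordism $C$ carrying a single transverse double point. Concatenating $L_1$ with $C$ and recalling that $C$ is topologically a cylinder, the union $L_1\cup C$ is again a disk, now immersed with exactly one double point and with boundary $K_2$. The explicit parameterizations are adapted to this: $K_1$ and $K_2$ share the first coordinate $\sin\theta$, so $C$ can be written as an explicit suspension in the remaining two coordinates, the double point appearing at the moment where the stabilization is performed.

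It then remains to perform the orientation-reversing surgery at that double point and to read off the Maslov class. The surgery yields an embedded Lagrangian $F\subset\mathbb{R}\times\mathbb{R}^3_{\mathrm{st}}$ diffeomorphic to $N_0\setminus\mathrm{Int}\,D^2_N$ and filling $K_2$. Its core circle $\gamma$, the core of the cross-cap created by the surgery, is orientation-reversing in $F$; since the mod $2$ reduction of the Maslov class of a Lagrangian is its first Stiefel--Whitney class, the Maslov index of $\gamma$ is odd, and a direct inspection of the standard Lagrangian neck shows that this index is $\pm 1$. Hence the Maslov homomorphism $\mu\colon H_1(F)\to\mathbb{Z}$ hits $1$, and the minimal Maslov number of $F$ equals $1$.

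I expect the first stage to be the main obstacle: making the immersed Lagrangian disk filling of $K_2$ explicit with exactly one double point, and checking that the chosen surgery neck really produces the non-orientable surface rather than the once-punctured torus while keeping the Maslov index of the cross-cap core equal to $1$. The Maslov bookkeeping for the non-orientable resolution, and its compatibility with the vanishing Maslov class of $K_2$, is the delicate point; once a loop of Maslov index $1$ is exhibited, the minimal Maslov number is automatically $1$, and the remaining assertions follow from standard properties of Lagrangian fillings and of Polterovich surgery.
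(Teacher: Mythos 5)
Your outline coincides with the paper's strategy: fill $K_1$ by an embedded Lagrangian disk, pass from $K_1$ to $K_2$ by an immersed Lagrangian cobordism with a single transverse double point, concatenate, and resolve the double point by Polterovich surgery. However, the two steps you defer are precisely where the content of the proof lies, and both contain genuine gaps as stated. The first concerns the topology of the resolution. The ``two choices'' in Polterovich's surgery correspond to the order of the two sheets at the double point, and in this dimension both choices yield the same surface (the paper notes this, citing \cite{Po91}); there is no freely selectable ``orientation-reversing neck.'' Whether resolving the immersed disk produces the punctured torus or $N_0\setminus\mathrm{Int}\,D^2_N$ is dictated by the local data of the double point, so one must actually verify that the double point of the cobordism from $K_1$ to $K_2$ has self-intersection number $-1$. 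The paper does this by writing down an explicit Lagrangian immersion $\tilde f_{-1}$ lifting the front homotopy and checking the sign; without that verification your construction could just as well return the orientable resolution, and the proposition would fail.

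The second gap is the Maslov computation. The orientation-reversing loop $\gamma$ is not contained in the surgery neck: it is the concatenation of a core arc $[0,1]\times\{\mathrm{pt}\}$ of the Lagrangian $1$-handle with a path in the immersed disk joining the two preimages of the double point. Its Maslov index is therefore the sum of the handle contribution (a rotation angle $\varphi_2$ with $-2\pi<\varphi_2<2\pi$) and the difference of Maslov potentials of the two sheets at the double point, and the latter requires a genuine computation along the connecting path; ``direct inspection of the standard Lagrangian neck'' cannot see it. The paper computes this potential difference explicitly and finds it equal to $\tfrac12$ (an angle $2\varphi_1$ with $\tfrac{\pi}{2}<2\varphi_1<\pi$), whence $-\tfrac32\pi<2\varphi_1+\varphi_2<3\pi$; only this estimate, combined with the oddness of the index of an orientation-reversing loop, pins the index to exactly $1$. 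Your argument yields only oddness, and since the other generator of $H_1$ (the meridian of the handle) has Maslov index $0$, an index of, say, $3$ for $\gamma$ would give minimal Maslov number $3$, not $1$. So the ``Maslov bookkeeping'' you flag as delicate is not a loose end but the main quantitative step of the proof.
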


\begin{proof}
First, we construct an immersed Lagrangian cobordism from $K_1$ to $K_2$ with exactly one double point in $\mathbb{R}\times\mathbb{R}^{3}_{\mathrm{st}}$ as follows.
We pick a smooth cutoff function $\rho_1\colon[0,1]\to[0,1]$ such that
\begin{enumerate}
	\item[$\bullet$] $\rho_1(s)=0$ and $\rho_1(1-s)=1$ if $0\leq s\leq\frac{1}{3}$, and
	\item[$\bullet$] $\rho_1^{\prime}(s)>0$ if $\frac{1}{3}<s<\frac{2}{3}$.
\end{enumerate}
Then, for a positive integer $n$, we define another cutoff function $\rho_2\colon[0,n]\to[0,1]$ by $\rho_2(t)=\rho_1\bigl(\frac{t}{n}\bigr)$.
Using the cutoff function $\rho_2$, we consider the homotopy
\[
k_{\mathrm{fr}}\colon[0,n]\times S^1\to\mathbb{R}^2:(t,\theta)\mapsto\Bigl(\sin\theta,\frac{2}{3}\cos^3\theta+\rho_2(t)\Bigl(\frac{2}{3}\cos^3\theta-\frac{8}{5}\cos^5\theta\Bigr)\Bigr)
\]
from the front of $K_1$ to that of $K_2$.
The monotonicity of $\rho_1$ implies that there is the unique tangent point $(0,0)=k_{\mathrm{fr}}(T,0)=k_{\mathrm{fr}}(T,\pi)$, where $T$ is defined by the equation $\rho_2(T)=\frac{5}{7}$.
Solving the differential equation $y=\frac{dz}{dx}$ on $\mathbb{R}^{3}_{\mathrm{st}}$, the front homotopy $k_{\mathrm{fr}}$ lifts to the Legendrian regular homotopy $f\colon[0,n]\times S^1\to\mathbb{R}^{3}_{\mathrm{st}}$,
\[
f(t,\theta)=\Bigl(\sin\theta,-\sin2\theta+\rho_2(t)(\sin4\theta+\sin2\theta),\frac{2}{3}\cos^3\theta+\rho_2(t)\Bigl(\frac{2}{3}\cos^3\theta-\frac{8}{5}\cos^5\theta\Bigr)\Bigr).
\]
Its trace $\mathrm{Tr}(f)\colon[0,n]\times S^1\to\mathbb{R}\times\mathbb{R}^{3}_{\mathrm{st}}:(t,\theta)\mapsto(t,f(t,\theta))$ has exactly one double point
\[
(T,0,0,0)=\mathrm{Tr}(f)(T,0)=\mathrm{Tr}(f)(T,\pi)
\]
corresponding to the tangent point $(0,0)$.
We can check that its self-intersection number is equal to $-1$.
Perturbing the trace $\mathrm{Tr}(f)$, we construct the Lagrangian immersion $\tilde{f}_{-1}\colon[0,n]\times S^1\to\mathbb{R}\times\mathbb{R}^{3}_{\mathrm{st}}$,
\begin{align*}
	\tilde{f}_{-1}(t,\theta)=\Bigl( & t,\sin\theta,-\sin2\theta+\rho_2(t)(\sin4\theta+\sin2\theta),\\
	& \frac{2}{3}\cos^3\theta+(\rho_2(t)+\rho_{2}^{\prime}(t))\Bigl(\frac{2}{3}\cos^3\theta-\frac{8}{5}\cos^5\theta\Bigr)\Bigr).
\end{align*}
Choosing the integer $n$ sufficiently large, the derivative $\rho_{2}^{\prime}$ can be arbitrarily small, and hence there is one-to-one correspondence between the double point of $\tilde{f}_{-1}$ and that of $\mathrm{Tr}(f)$ preserving self-intersection number.
We denote by $q$ the double point of self-intersection number $-1$ of the Lagrangian immersion $\tilde{f}_{-1}$.
The image $\tilde{f}_{-1}([0,n]\times S^1)$ is an immersed Lagrangian cobordism from $K_1$ to $K_2$, since $\tilde{f}_{-1}([0,\varepsilon)\times S^1)=[0,\varepsilon)\times K_1$ and $\tilde{f}_{-1}((n-\varepsilon,n]\times S^1)=(n-\varepsilon,n]\times K_2$ for a sufficiently small positive constant $\varepsilon$.

We recall that the standard Legendrian unknot $\{0\}\times K_1$ admits a Lagrangian filling by $2$-disk in $(-\infty,0]\times\mathbb{R}^{3}_{\mathrm{st}}$.
Concatenating this Lagrangian filling and the immersed Lagrangian cobordism $\tilde{f}_{-1}$ along $\{0\}\times K_1$, we construct an immersed Lagrangian filling $\tilde{h}_{-1}\colon D^2\to\mathbb{R}\times\mathbb{R}^{3}_{\mathrm{st}}$ of $K_2$ with exactly one double point $q$.
Resolving the double point $q$ by Polterovich's Lagrangian surgery~\cite{Po91}, we obtain a Lagrangian filling $\tilde{h}_{0}\colon N_{0}\setminus\mathrm{Int}\,D^2_N\to\mathbb{R}\times\mathbb{R}^{3}_{\mathrm{st}}$ of $K_2$.
Although there are two choices of the surgery depending on an order of the two sheets at the double point $q$, each choice yields the same result in this dimension, see \cite{Po91}.

Next, we compute the difference of Maslov potentials on the two sheets at the double point $q$ of the Lagrangian cobordism $\tilde{f}_{-1}$ and then show minimal Maslov number of the Lagrangian filling $\tilde{h}_0$ is equal to $1$.
A similar computation was made in \cite[Section~2.2]{ES16}.
It suffices to consider the case $\rho_1\colon[0,1]\to[0,1]$ is the identity and $n=7$.
In fact, although the identity is not a cutoff function, the linear homotopy from the cutoff function $\rho_1$ to the identity can be realized by a Lagrangian regular homotopy of the Lagrangian immersion $\tilde{f}_{-1}$.
Moreover, a change of the integer $n$ can also be realized by a Lagrangian regular homotopy of $\tilde{f}_{-1}$.
A straightforward computation shows that the Lagrangian immersion $\tilde{f}_{-1}$ has exactly one double point $q=\tilde{f}_{-1}(4,0)=\tilde{f}_{-1}(4,\pi)$ if $\rho_1$ is the identity and $n=7$.
In this case, the Lagrangian immersion $\tilde{f}_{-1}\colon[0,7]\times S^1\to\mathbb{R}\times\mathbb{R}^{3}_{\mathrm{st}}$ is of the form
\begin{align*}
	\tilde{f}_{-1}(t,\theta)=\Bigl( & t,\sin\theta,-\sin2\theta+\frac{t}{7}(\sin4\theta+\sin2\theta),\\
	& \frac{2}{3}\cos^3\theta+\frac{t+1}{7}\Bigl(\frac{2}{3}\cos^3\theta-\frac{8}{5}\cos^5\theta\Bigr)\Bigr).
\end{align*}

For the computation, we choose the path $l\colon[0,1]\to[0,7]\times S^1:s\mapsto(4,\pi s)$, the reference Lagrangian subspace $P_0=(\tilde{f}_{-1})_{\ast}T_{(4,0)}([0,7]\times S^1)$, the symplectic structure $d(e^{t}\alpha_{\mathrm{st}})=e^{t}(dt\wedge(dz-ydx)+dx\wedge dy)$ on the symplectization $\mathbb{R}\times\mathbb{R}^{3}_{\mathrm{st}}$, and its symplectic $4$-frame $\{\frac{\partial}{\partial t}, \frac{\partial}{\partial z}, \frac{\partial}{\partial x}+y\frac{\partial}{\partial z},\frac{\partial}{\partial y}\}$.
Then the Lagrangian $2$-frame $\{(\tilde{f}_{-1})_{\ast}\frac{\partial}{\partial t},(\tilde{f}_{-1})_{\ast}\frac{\partial}{\partial\theta}\}$ along the path $(\tilde{f}_{-1}\circ l)([0,1])$ is of the form
\begin{align*}
	(\tilde{f}_{-1})_{\ast}\frac{\partial}{\partial t}= & \frac{\partial}{\partial t}+\frac{1}{7}\Bigl(\frac{2}{3}\cos^3(\pi s)-\frac{8}{5}\cos^5(\pi s)\Bigr)\frac{\partial}{\partial z}+\frac{1}{7}(\sin(4\pi s)+\sin(2\pi s))\frac{\partial}{\partial y},\\
	(\tilde{f}_{-1})_{\ast}\frac{\partial}{\partial\theta}= & \frac{1}{7}\cos(\pi s)(\sin(4\pi s)+\sin(2\pi s))\frac{\partial}{\partial z}+\cos(\pi s)\Bigl(\frac{\partial}{\partial x}+y\frac{\partial}{\partial z}\Bigr)\\
	& +\frac{2}{7}(8\cos(4\pi s)-3\cos(2\pi s))\frac{\partial}{\partial y}.
\end{align*}
Taking their components, we define the paths of matrices
\begin{align*}
	X(s) & =
	\begin{pmatrix}
	1 & 0\\
	0 & \cos(\pi s)
	\end{pmatrix}
	\text{ and}\\
	Y(s) & =
	\begin{pmatrix}
	\frac{1}{7}(\frac{2}{3}\cos^3(\pi s)-\frac{8}{5}\cos^5(\pi s)) & \frac{1}{7}(\sin(4\pi s)+\sin(2\pi s))\\
	\frac{1}{7}\cos(\pi s)(\sin(4\pi s)+\sin(2\pi s)) & \frac{2}{7}(8\cos(4\pi s)-3\cos(2\pi s))
	\end{pmatrix}.
\end{align*}
The difference of Maslov potentials on the two sheets at the double point $q$ for the reference Lagrangian subspace $P_0$ is computed by counting points through the angle
\[
\arg(\det(X(0)+iY(0))^2)=2\arg(\det(X(0)+iY(0)))
\]
on the path $\det(X(s)+iY(s))^2\colon[0,1]\to\mathbb{C}\setminus\{0\}$.
This counting is invariant under a homotopy of paths $[0,1]\to\mathbb{C}\setminus\{0\}$ relative to the boundary.
In order to count, we look at the path $\det(X(s)+iY(s))\colon[0,1]\to\mathbb{C}\setminus\{0\}$ being of the form
\begin{align*}
	\frac{1}{49}\Bigl(\frac{704}{5}\cos^9(\pi s)-\frac{640}{3}\cos^7(\pi s)+\frac{1388}{15}\cos^5(\pi s)-\frac{32}{3}\cos^3(\pi s)+49\cos(\pi s)\Bigr)\\
	+\frac{i}{7}\Bigl(-\frac{8}{5}\cos^6(\pi s)+\frac{386}{3}\cos^4(\pi s)-140\cos^2(\pi s)+22\Bigr).
\end{align*}
Using the expression, we can show that
\begin{enumerate}
	\item $\det(X(0)+iY(0))=\frac{1}{105}(115+136i)$;\label{1st}
	\item $\det(X(\frac{1}{2})+iY(\frac{1}{2}))=\frac{22}{7}i$;
	\item $\det(X(1)+iY(1))=\frac{1}{105}(-115+136i)$;
	\item $\mathrm{Re}(\det(X(s)+iY(s)))=-\mathrm{Re}(\det(X(1-s)+iY(1-s)))$ if $0\leq s\leq\frac{1}{2}$;
	\item $\mathrm{Im}(\det(X(s)+iY(s)))=\mathrm{Im}(\det(X(1-s)+iY(1-s)))$ if $0\leq s\leq\frac{1}{2}$; and\label{last}
	\item $\mathrm{Re}(\det(X(s)+iY(s)))>0$ if $0\leq s<\frac{1}{2}$.\label{posi}
\end{enumerate}
Actually, the equalities (\ref{1st})--(\ref{last}) are straightforward.
The inequality (\ref{posi}) can be shown by using the inequality of arithmetic and geometric means to estimate the first three terms.
These properties (\ref{1st})--(\ref{posi}) imply that there exists a homotopy of paths $[0,1]\to\mathbb{C}\setminus\{0\}$ from $\det(X(s)+iY(s))$ to the counterclockwise circular arc, from $\det(X(0)+iY(0))$ to $\det(X(1)+iY(1))$, relative to the boundary.
Moreover, its rotation angle $\varphi_1$ satisfies $\frac{\pi}{4}<\varphi_1<\frac{\pi}{2}$, see Figure~\ref{determinant}.
\begin{figure}[h]
	\centering
	\includegraphics[width=5cm,clip]{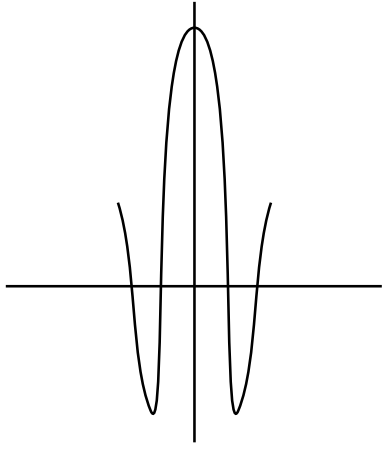}
	\caption{The path $\det(X(s)+iY(s))$ in $\mathbb{C}\setminus\{0\}$.}
	\label{determinant}
\end{figure}
This homotopy induces a homotopy of paths $[0,1]\to\mathbb{C}\setminus\{0\}$ from $\det(X(s)+iY(s))^2$ to the counterclockwise circular arc, from $\det(X(0)+iY(0))^2$ to $\det(X(1)+iY(1))^2$, relative to the boundary.
Furthermore, its rotation angle $2\varphi_1$ satisfies $\frac{\pi}{2}<2\varphi_1<\pi$.
Therefore, the difference of Maslov potentials on the two sheets at the double point $q$ for the reference Lagrangian subspace $P_0$ is equal to $\frac{1}{2}$.

Combining the above discussion with the construction of Polterovich's Lagrangian $1$-handle $[0,1]\times S^1$~\cite{Po91}, we can see that minimal Maslov number of the Lagrangian filling $\tilde{h}_0$ is equal to $1$.
In fact, Polterovich's Lagrangian surgery~\cite{Po91} resolves the double point $q$ of $\tilde{f}_{-1}$ and creates two loops.
The one is the meridian loop $\{0\}\times S^1$ of the Lagrangian $1$-handle whose Maslov index is equal to zero.
The other one is the orientation-reversing loop obtained by smoothing the path $\tilde{f}_{-1}\circ l$ along the path $[0,1]\times\{\text{pt}\}$ of the Lagrangian $1$-handle.
By this smoothing, the path $\det(X(s)+iY(s))^2$ extends to a loop $S^1\to\mathbb{C}\setminus\{0\}$.
The rotation angle $\varphi_2$ of the extended part of this loop $S^1\to\mathbb{C}\setminus\{0\}$ is coming from the path $[0,1]\times\{\text{pt}\}$ of the Lagrangian $1$-handle, and hence satisfies $-2\pi<\varphi_2<2\pi$, see \cite[Section 2]{Po91}.
Therefore, we obtain the estimate
\[
-\frac{3}{2}\pi<2\varphi_1+\varphi_2<3\pi.
\]
We recall that the Maslov index of a loop is odd if and only if the loop is orientation-reversing, so we also have
\[
2\varphi_1+\varphi_2\in\{2\pi(2k+1)\mid k\in\mathbb{Z}\}\text{.}
\]
We conclude that the rotation angle $2\varphi_1+\varphi_2$ of the orientation-reversing loop is equal to $2\pi$, and thus its Maslov index is equal to $1$.
\end{proof}

\begin{proof}[Proof of Theorem \ref{torusfilling}]\label{proofcap}
We claim that the front $S^1$-spinning~\cite{Go14} of the Lagrangian filling $\tilde{h}_0$ constructed in Proposition~\ref{knotfilling} is the desired one.
First, composing a parallel transformation on the $x$-coordinate direction by a sufficiently large positive constant, we modify the Lagrangian filling $\tilde{h}_{0}\colon N_0\setminus\mathrm{Int}\,D^2_N\to\mathbb{R}\times\mathbb{R}^{3}_{\mathrm{st}}:p\mapsto(\tilde{t}(p),\tilde{x}(p),\tilde{y}(p),\tilde{z}(p))$ to satisfy $\tilde{x}>0$.
Applying the front $S^1$-spinning construction~\cite{Go14} to $\tilde{h}_0$, we obtain the Lagrangian filling $F\colon S^{1}\times(N_0\setminus\mathrm{Int}\,D^2_N)\to\mathbb{R}\times\mathbb{R}^{5}_{\mathrm{st}}$,
\[
F(\theta,p)=(\tilde{t}(p),\tilde{x}(p)\cos\theta,\tilde{y}(p)\cos\theta,\tilde{x}(p)\sin\theta,\tilde{y}(p)\sin\theta,\tilde{z}(p)).
\]
Then the Legendrian torus boundary $\Sigma_{S^1}K_2=F(S^{1}\times\partial D^2)\subset\{n\}\times\mathbb{R}^{5}_{\mathrm{st}}$ is the front $S^{1}$-spinning~\cite{EES05} of $K_2$, and hence is loose \cite{DG14} in the sense of \cite{Mu12}.
Moreover, its Maslov class vanishes.
In fact, the Maslov index of the loop $F(S^1\times\{\mathrm{pt}\})$ is equal to zero~\cite{EES05}.
For the loop $F(\{\mathrm{pt}\}\times\partial D^2)=K_2$, the vanishing of the Maslov index is straightforward.
\end{proof}

\begin{remark}
The Lagrangian fillings $\tilde{h}_0$ and $F$ are non-exact since the Legendrian boundary of  $\tilde{h}_0$ is stabilized \cite{Ch02, Ek08} and that of $F$ is loose \cite{Mu12}.
\end{remark}

\subsection{Construction of a Lagrangian embedding}

In this section, we prove Theorem~\ref{Main}.
Concatenating the Lagrangian filling in Theorem~\ref{torusfilling} and a Lagrangian cap, we construct the desired Lagrangian embedding.
We start with a construction of an immersed Lagrangian cap for the particular case.
For a non-negative integer $g$, we fix an embedded closed $2$-disk $D^2_g$ in the closed surface $\Sigma_g$.

\begin{lemma}\label{immcap}
Let $L$ be a closed orientable connected $3$-manifold, $g$ a non-negative integer, and $M^{\prime}$ the connected sum
\[
L\#(S^1\times(\Sigma_g\setminus\mathrm{Int}\,D^2_g)).
\]
Then, the $3$-manifold $M^{\prime}$ can be realized as an immersed Lagrangian cap of the loose Legendrian torus $\Sigma_{S^1}K_2$ and of the self-intersection number zero modulo two.
\end{lemma}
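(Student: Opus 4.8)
The plan is to construct the immersed Lagrangian cap by first building a cap for a single circle factor and then exploiting the $h$-principle for caps with loose Legendrian boundary. The basic strategy mirrors the construction in \cite{EEMS13}: one wants to realize $M'$ as a Lagrangian cobordism to the empty set in the top part of the symplectization, with negative end the loose Legendrian torus $\Sigma_{S^1}K_2$. Since the statement only requires an \emph{immersed} cap with self-intersection number zero modulo two, I have considerable freedom—the formal (homotopy-theoretic) obstructions are the only essential ones, and these are governed by the existence of an appropriate formal Lagrangian immersion extending the given boundary data.

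\medskip

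\noindent First I would analyze the topology of $M'$. Writing $M' = L\#(S^1\times(\Sigma_g\setminus\operatorname{Int}D^2_g))$, observe that $\Sigma_g\setminus\operatorname{Int}D^2_g$ is a once-punctured genus-$g$ surface, so it is homotopy equivalent to a wedge of $2g$ circles and in particular has free fundamental group; the product with $S^1$ and the connected summand $L$ can be handled by standard surgery-theoretic decompositions. The boundary of the cap must be $\Sigma_{S^1}K_2 \cong S^1\times\partial D^2 = T^2$, which is exactly $\partial(S^1\times(\Sigma_g\setminus\operatorname{Int}D^2_g))$ after capping off $L$'s contribution. The key point is that $M'$, viewed as a manifold with this torus boundary, admits a formal Lagrangian immersion into the symplectization rel the prescribed Legendrian boundary: by Gromov--Lees the obstruction lies in the homotopy of the Lagrangian Grassmannian bundle, and because $\Sigma_{S^1}K_2$ is \emph{loose} with vanishing Maslov class (Theorem~\ref{torusfilling} and its remark), the Eliashberg--Murphy $h$-principle of \cite{EM13} converts a formal solution into a genuine immersed Lagrangian cap.

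\medskip

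\noindent The main steps I would carry out are: (i) exhibit an explicit formal Lagrangian cap, e.g.~by taking the product cap $S^1\times(\text{Lagrangian cap of an unknot})$ for the $S^1\times(\Sigma_g\setminus\operatorname{Int}D^2_g)$ factor via the front spinning already used in the proof of Theorem~\ref{torusfilling}, and attaching the closed orientable summand $L$ by a connect-sum along a small ball where $L$ admits a local Lagrangian immersion (every closed orientable $3$-manifold immerses Lagrangianly into $\mathbb{R}^6_{\mathrm{st}}$ by Gromov--Lees, as it is parallelizable); (ii) verify that the resulting formal data matches $\Sigma_{S^1}K_2$ at the negative end, using that this Legendrian is loose; (iii) apply the $h$-principle of \cite{EM13} to obtain a genuine immersion; and (iv) compute the self-intersection number modulo two. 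For step (iv), I would compute the double-point count as the mod-two self-intersection of $M'$ relative to boundary, which by the Whitney formula equals the relevant normal Euler-type invariant; since $M'$ is a $3$-manifold immersing in the $6$-dimensional symplectization with a prescribed boundary, the parity is determined by characteristic-class data that should vanish because the orientable summand $L$ and the spun construction each contribute evenly.

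\medskip

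\noindent \textbf{The hardest part} will be step (iv): controlling the parity of the self-intersection number. Unlike the existence of the immersion—which the $h$-principle delivers almost for free once the formal data is in place—the mod-two double-point count is a genuine invariant that must be pinned down, and it is precisely this parity that will later allow the final Lagrangian surgery to produce an \emph{embedded} cap. I expect the argument to hinge on tracking how the front-spinning construction and the connect-sum with $L$ each affect the normal bundle and the self-intersection class, and on arranging the formal homotopy so that any excess double points are created or cancelled in cancelling pairs. The computation of the Maslov data from Proposition~\ref{knotfilling}, together with the vanishing of the Maslov class of $\Sigma_{S^1}K_2$, should ensure the relevant obstruction is even, but verifying this cleanly is where the real work lies.
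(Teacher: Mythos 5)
There is a genuine gap at your step~(iv), and it is exactly the step you flag as the hardest. You propose to \emph{compute} the mod-two self-intersection number of the immersed cap from characteristic-class data and argue that it vanishes because the summand $L$ and the spun piece ``each contribute evenly.'' No such computation is given, and it is not clear it can be carried out as stated: the relative Gromov--Lees $h$-principle gives you essentially no control over the actual double points of the resulting immersion, and an Audin-type parity formula for a $3$-manifold with boundary and prescribed Legendrian boundary condition is not something you can simply quote. The paper does not compute the parity at all; it \emph{corrects} it. If the immersed cap $\tilde{\phi}$ has self-intersection number one modulo two, one takes Audin's Lagrangian immersion $G$ of $S^{3}$ into a small Darboux ball (with self-intersection number zero mod two), arranges $G(S^3)\subset(n,\infty)\times\mathbb{R}^{5}_{\mathrm{st}}$ to meet $\tilde{\phi}$ transversely in exactly two points, performs Polterovich surgery at one of them to form the connected sum $\tilde{\phi}\# G$ (which does not change the diffeomorphism type, since $M^{\prime}\# S^{3}\cong M^{\prime}$, and does not disturb the boundary behaviour), and observes that the remaining intersection point becomes a new double point, flipping the parity to zero. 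This connected-sum trick is the missing idea; ``arranging the formal homotopy so that excess double points cancel in pairs'' is not a substitute for it.

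Two smaller corrections. First, the immersed cap itself is produced by the \emph{relative Gromov--Lees $h$-principle for Lagrangian immersions}, not by the Eliashberg--Murphy $h$-principle of \cite{EM13}; looseness of $\Sigma_{S^1}K_2$ plays no role in Lemma~\ref{immcap} and is only used later, when \cite[Theorem 2.2]{EM13} upgrades the immersed cap to an embedded one in the proof of Theorem~\ref{Main}. What \emph{is} needed at the formal stage is the vanishing of the Maslov class of $\Sigma_{S^1}K_2$, which you correctly identify: it guarantees that a Lagrangian bundle homomorphism $TM^{\prime}\to T(\mathbb{R}\times\mathbb{R}^{5}_{\mathrm{st}})$ with constant Gauss map (available since $M^{\prime}$ is parallelizable) restricts on $\partial M^{\prime}$ to something homotopic to $dF\mid_{S^1\times\partial D^2}$, so the formal data extends. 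Second, your proposed explicit construction of the formal cap by spinning a ``Lagrangian cap of an unknot'' is unnecessary and somewhat circular; the parallelizability argument is both simpler and what the paper uses.
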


\begin{proof}
We note that the existence of a Lagrangian immersion $M^{\prime}\to\mathbb{R}\times\mathbb{R}^{5}_{\mathrm{st}}$ is equivalent to the triviality of the complexified tangent bundle $TM^{\prime}\otimes\mathbb{C}$ by Gromov--Lees $h$-principle for Lagrangian immersions~\cite{Gr86, Le76}.
In this case, the parallelizability of $M^{\prime}$ implies the latter condition.
Moreover, we can choose a Lagrangian immersion $M^{\prime}\to\mathbb{R}\times\mathbb{R}^{5}_{\mathrm{st}}$ to be an immersed Lagrangian cap of $\Sigma_{S^1}K_2$ as follows.

The parallelizability of $M^{\prime}$ allows us to take a Lagrangian homomorphism $TM^{\prime}\to T(\mathbb{R}\times\mathbb{R}^{5}_{\mathrm{st}})$ such that its Gauss map $M^{\prime}\to\mathrm{U}(3)\slash\mathrm{O}(3)$ is constant, where $\mathrm{U}(3)\slash\mathrm{O}(3)$ is the Lagrangian Grassmannian.
Its restriction on the boundary $\partial M^{\prime}$ is homotopic to the Lagrangian homomorphism $dF\mid_{S^1\times\partial D^2}$ defined by the Lagrangian filling $F$ constructed in Theorem~\ref{torusfilling} as Lagrangian homomorphisms, since the Maslov class of $\Sigma_{S^1}K_2$ vanishes.
Therefore, there exists a Lagrangian homomorphism $\Phi\colon TM^{\prime}\to T([n,\infty)\times\mathbb{R}^{5}_{\mathrm{st}})$ that is an extension of $dF\mid_{S^1\times\partial D^2}$.
We denote by $\phi\colon M^{\prime}\to[n,\infty)\times\mathbb{R}^{5}_{\mathrm{st}}$ the underlying map of $\Phi$.
Using the contractibility of $\mathbb{R}\times\mathbb{R}^{5}_{\mathrm{st}}$, we may assume that the map $\phi$ is a smooth extension of $F\mid_{S^1\times\partial D^2}$.
Then the relative cohomology class $[\phi^{\ast}d(e^t\alpha_{\mathrm{st}})]\in H^2(M^{\prime},\partial M^{\prime};\mathbb{R})$ vanishes by the property $H^2(\mathbb{R}\times\mathbb{R}^{5}_{\mathrm{st}};\mathbb{R})=0$.
By the construction, we can choose the formal Lagrangian immersion $(\phi,\Phi)$ so that
\begin{enumerate}
	\item[$\bullet$] $d\phi=\Phi$ on a small neighborhood $U$ of the boundary $\partial M^{\prime}=S^1\times\partial D^2_g$;
	\item[$\bullet$] $\phi(U)\cap[n,n+\varepsilon)\times\mathbb{R}^{5}_{\mathrm{st}}=[n,n+\varepsilon)\times\Sigma_{S^1}K_2$ for a small positive constant $\varepsilon$.
\end{enumerate}
Applying the relative version of Gromov--Lees $h$-principle for Lagrangian immersions~\cite{Gr86, Le76}, see also \cite[Theorem 16.3.2]{EM02}, to the formal Lagrangian immersion $(\phi,\Phi)$, we obtain an immersed Lagrangian cap $\tilde{\phi}\colon M^{\prime}\to[n,\infty)\times\mathbb{R}^{5}_{\mathrm{st}}$ of $\Sigma_{S^1}K_2$.

We show that the Lagrangian immersion $\tilde{\phi}\colon M^{\prime}\to[n,\infty)\times\mathbb{R}^{5}_{\mathrm{st}}$ can be chosen to have the self-intersection number zero modulo two.
If the self-intersection number of $\tilde{\phi}$ is equal to one modulo two, we modify it as follows.
There exists a Lagrangian immersion $G$ of the $3$-sphere to a Darboux ball in $\mathbb{R}\times\mathbb{R}^{5}_{\mathrm{st}}$ of the self-intersection number zero modulo two~\cite{Au88}.
We may assume that
\begin{enumerate}
	\item[$\bullet$] the image $G(S^3)$ is contained in $(n,\infty)\times\mathbb{R}^{5}_{\mathrm{st}}$;
	\item[$\bullet$] $\tilde{\phi}$ and $G$ intersect transversely at exactly two points.
\end{enumerate}
In fact, we can deform the Lagrangian immersion $G$ to satisfy these conditions by a parallel transformation and a small perturbation as a Lagrangian immersion.
Applying Polterovich's Lagrangian surgery~\cite{Po91} to one intersection, we construct the connected sum $\tilde{\phi}\# G\colon M^{\prime}\to[n,\infty)\times\mathbb{R}^{5}_{\mathrm{st}}$ of the Lagrangian immersions such that
\[
(\tilde{\phi}\# G)(M^{\prime})\cap[n,n+\varepsilon^{\prime}]\times\mathbb{R}^{5}_{\mathrm{st}}=\tilde{\phi}(M^{\prime})\cap[n,n+\varepsilon^{\prime}]\times\mathbb{R}^{5}_{\mathrm{st}}
\]
for a small positive constant $\varepsilon^{\prime}$.
Thus the image $(\tilde{\phi}\# G)(M^{\prime})$ is an immersed Lagrangian cap of $\Sigma_{S^1}K_2$ and its self-intersection number is equal to zero modulo two.
\end{proof}

\begin{proof}[Proof of Theorem~\ref{Main}]
We first prove the particular case.
We denote by $\tilde{\phi}$ the Lagrangian immersion constructed in the proof of Lemma~\ref{immcap}.
We deform the Lagrangian immersion $\tilde{\phi}$ to a formal Lagrangian embedding of $M^{\prime}$ into $[n,\infty)\times\mathbb{R}^{5}_{\mathrm{st}}$ relative to a small neighborhood of the loose Legendrian boundary $\Sigma_{S^1}K_2$.
Applying \cite[Theorem 2.2]{EM13} to this formal Lagrangian embedding, we get a Lagrangian cap $\tilde{\phi}_0\colon M^{\prime}\to[n,\infty)\times\mathbb{R}^{5}_{\mathrm{st}}$ of $\Sigma_{S^1}K_2$.
Concatenating the Lagrangian filling $F$ and the Lagrangian cap $\tilde{\phi}_0$ along $\Sigma_{S^1}K_2$, we obtain a Lagrangian embedding $L\#(S^1\times N_{2g})\to\mathbb{R}\times\mathbb{R}^{5}_{\mathrm{st}}$ of minimal Maslov number $1$.
We recall that the symplectization $\mathbb{R}\times S^{5}_{\mathrm{st}}$ of the standard contact sphere $S^{5}_{\mathrm{st}}$ is symplectomorphic to the symplectic manifold $\mathbb{R}^{6}_{\mathrm{st}}\setminus\{\mathbf{0}\}$.
The construction is done by composing a symplectic embedding $\mathbb{R}\times\mathbb{R}^{5}_{\mathrm{st}}\to\mathbb{R}\times S^{5}_{\mathrm{st}}\subset\mathbb{R}^{6}_{\mathrm{st}}$ induced by a contact embedding $\mathbb{R}^{5}_{\mathrm{st}}\to S^{5}_{\mathrm{st}}$.

We can similarly prove the general case.
In fact, for a closed orientable connected $3$-manifold $M$ and an embedded $2$-torus $T\subset M$ bounding a solid torus with a parameterization $S^1\times D^2$, the $3$-manifold $M\setminus\mathrm{Int}\,(S^1\times D^2)$ is parallelizable and its boundary is diffeomorphic to a $2$-torus.
The proof of Lemma~\ref{immcap} depends only on these properties, so the $3$-manifold $M\setminus\mathrm{Int}\,(S^1\times D^2)$ can also be realized as an immersed Lagrangian cap of $\Sigma_{S^1}K_2$ and of the self-intersection number zero modulo two.
The existence of such an immersed Lagrangian cap allows us to apply the same construction.
\end{proof}

\begin{remark}
Let $N$ be a closed non-orientable connected $3$-manifold with trivial complexified tangent bundle $TN\otimes\mathbb{C}\to N$ and $M$ a $3$-manifold as in Theorem~\ref{Main}.
Then, the connected sum $M_T\#N$ also admits a Lagrangian embedding into $\mathbb{R}^{6}_{\mathrm{st}}$.
Actually, we can construct an immersed Lagrangian cap $(M\setminus\mathrm{Int}\,(S^1\times D^2))\#N$ of $\Sigma_{S^1}K_2$ by taking the connected sum of the Lagrangian cap $M\setminus\mathrm{Int}\,(S^1\times D^2)$ and a Lagrangian immersion $N\to\mathbb{R}\times\mathbb{R}^{5}_{\mathrm{st}}$ in a way similar to the construction of $\tilde{\phi}\#G$ in the proof of Lemma~\ref{immcap}.
The rest of the construction is the same to the proof of Theorem~\ref{Main}.
On the other hand, there exists a compact $3$-manifold with trivial complexified tangent bundle and torus boundary that can not be realized as an immersed Lagrangian cap of $\Sigma_{S^1}K_2$.
The direct product of the circle and the M{\"o}bius band is an example of such a $3$-manifold.
In fact, the boundary of the M{\"o}bius band is homotopic to twice the centered orientation-reversing loop, so its Maslov index can not be zero for any Lagrangian immersion.
\end{remark}

\begin{remark}
By \cite[Theorem 2.2]{EM13}, we can choose the Lagrangian cap $M\setminus\mathrm{Int}\,(S^1\times D^2)$ to be exact.
In particular, Theorem~\ref{Main} for the case $L=S^3$ gives a Lagrangian embedding $S^1\times N_{2g}\to\mathbb{R}\times\mathbb{R}^{5}_{\mathrm{st}}$ being a concatenation of the Lagrangian filling $F$ and an exact Lagrangian cap along the loose Legendrian torus $\Sigma_{S^1}K_2$.
The particular case of Theorem~\ref{Main} is a consequence of the existence of this Lagrangian embedding.
On the other hand, if $g\geq 1$ then no Lagrangian embedding $S^{1}\times\Sigma_{g}\to\mathbb{R}\times\mathbb{R}^{5}_{\mathrm{st}}$ can be a concatenation of a Lagrangian filling and an exact Lagrangian cap by \cite[Proposition 1.4]{Di15} and \cite[Theorem 1.1]{BM14}.
\end{remark}

\end{document}